\newtheorem{theorem}{Theorem}[section]
\newtheorem{lemma}[theorem]{Lemma}
\theoremstyle{definition}
\newtheorem{definition}[theorem]{Definition}
\theoremstyle{remark}
\newtheorem{remark}[theorem]{Remark}
\numberwithin{equation}{section}
\begin{document}

\title{Free Spaces Over Countable Compact Metric Spaces}

\author{A. DALET}
\address{Laboratoire de Math\'ematiques de Besan\c con, CNRS UMR 6623,\\
Universit\'e de Franche-Comt\'e, 16 Route de Gray, 25030 Besan\c con Cedex, FRANCE.}
\curraddr{}
\email{aude.dalet@univ-fcomte.fr}
\thanks{The first author was partially supported by PHC Barrande 26516YG}

\subjclass[2010]{Primary 46B10, 46B28}

\date{July 22, 2013 and, in revised form, March 11, 2014.}


\keywords{Lipschitz-free space, duality, bounded approximation property.}

\begin{abstract}
We prove that the Lipschitz-free space over a countable compact metric space is isometric to a dual space and has the metric approximation property.
\end{abstract}


\newtheorem{Prop}[theorem]{Proposition}
\newtheorem{Coro}[theorem]{Corollary}

\newcommand{\N}{\mathbb{N}}
\newcommand{\Z}{\mathbb{Z}}
\newcommand{\Q}{\mathbb{Q}}
\newcommand{\C}{\mathbb{C}}
\newcommand{\R}{\mathbb{R}}
\newcommand{\F}[1]{\mathcal{F}(#1)}
\newcommand{\norm}[1]{\|#1\|}


\maketitle

\section{Introduction}

Let $(M,d)$ be a pointed metric space, that is to say a metric space equipped with a distinguished origin, denoted $0$. The space $Lip_0(M)$ of Lipschitz functions from $M$ to $\R$ vanishing at $0$ is a Banach space equipped with the Lipschitz norm: 
$$	\norm{f}_L=\sup_{x\neq y}\cfrac{|f(x)-f(y)|}{d(x,y)}.	$$  
Its unit ball is compact with respect to the pointwise topology, then $Lip_0(M)$ is a dual space. In \cite{GK}, its predual is called the Lipschitz free space over $M$, denoted $\F{M}$ and it is the closed linear span of $\left\lbrace \delta_x\ , \ x\in M\right\rbrace $ in $Lip_0(M)^*$. One can prove that the map $\delta : M \rightarrow \F{M}$ is an isometry. For more details on the basic theory of the spaces of Lipschitz functions and their preduals, called Arens-Eells space there, see \cite{W}.

Very little is known about the structure of Lipschitz-free spaces. For instance $\F{\R}$ is isomorphically isometric to $L_1$, but A. Naor and G. Schechtman \cite{NS} proved that $\F{\R^2}$ is not isomorphic to any subspace of $L_1$. The study of the Lipschitz-free space over a Banach space is useful to learn more about the structure of this Banach space. For example G. Godefroy and N. Kalton \cite{GK} proved, using this theory, that if a separable Banach space $X$ isometrically embeds in a Banach space $Y$, then $Y$ contains a linear subspace which is linearly isometric to $X$.

\medskip

We recall that a Banach space $X$ is said to have the approximation property (AP) if for every $\varepsilon >0$ and every compact set $K\subset X$, there is a bounded finite-rank linear operator $T:X\rightarrow X$ such that $\norm{Tx-x}\leq \varepsilon$ for every $x\in K$. If moreover there exists $1\leq\lambda <+\infty$ not depending on $\varepsilon$ or $K$ such that $\norm{T}\leq \lambda$ then $X$ has the $\lambda$-bounded approximation property ($\lambda$-BAP) and $X$ has the bounded approximation property (BAP) if it has the $\lambda$-BAP for some $\lambda$. Finally $X$ has the metric approximation property (MAP) if $\lambda=1$.

\medskip

It is already known that $\F{\R^n}$ has the MAP \cite{GK}, and that if $M$ is a doubling metric space then $\F{M}$ has the BAP \cite{LP}. Moreover E. Perneck\'a and P. H\'ajek \cite{HP} proved that $\F{\ell_1}$ and $\F{\R^n}$ have a Schauder basis.
However, G. Godefroy and N. Ozawa \cite{GO} constructed a compact metric space $K$ such that $\F{K}$ fails the AP. 

\medskip

In the first part of this article we will prove that the Lipschitz-free space over a countable compact metric space $K$ is isometrically isomorphic to the dual space of $lip_0(K)\subset Lip_0(K)$. 
Let $\omega_1$ be the first uncountable ordinal, we will prove, by induction on $\alpha <\omega_1$ such that $K^{(\alpha)}$ is finite, that $\F{K}$ has the MAP. This will rely on a theorem of A. Grothendieck \cite{Gr} asserting that any separable dual having the BAP has the MAP, and a decomposition of the space $K$ due to N. Kalton \cite{K1}.
This provides a negative answer to Question 2 in \cite{GO}, which was originally asked by G. Aubrun to G. Godefroy during a seminar in Lyon about his paper with N. Ozawa.

\section{Duality} 

For any pointed metric space $(M,d)$ we denote by $lip_0(M)$ the subspace of $Lip_0(M)$ defined as follows: $f\in lip_0(M)$ if and only if for every $\varepsilon>0$, there is a $\delta >0$ such that for $x,y\in M$, $d(x,y)<\delta $ implies $|f(x)-f(y)|\leq \varepsilon d(x,y)$.

\medskip

The main result of this section is the following:

\begin{theorem}\label{Thmdual}
If $(K,d)$ is a countable compact metric space, then $\F{K}$ is isometrically isomorphic to a dual space, namely $lip_0(K)^*$.
\end{theorem}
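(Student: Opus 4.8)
The plan is to exhibit the natural map and reduce the statement to two properties of the little-Lipschitz space. Since $lip_0(K)$ is a closed subspace of $Lip_0(K)=\F{K}^*$, the predual duality $\langle Lip_0(K),\F{K}\rangle$ restricts to a bilinear pairing between $\F{K}$ and $lip_0(K)$, defining a linear map $T:\F{K}\to lip_0(K)^*$ by $T\mu(f)=\langle f,\mu\rangle$. Because $lip_0(K)\subseteq Lip_0(K)$ carries the same norm, $T$ is automatically a contraction, so the theorem amounts to showing that $T$ is an isometry onto $lip_0(K)^*$. Isometry (equivalently injectivity) is in turn equivalent to $lip_0(K)$ being norming for $\F{K}$, i.e. to the closed unit ball $B_{lip_0(K)}$ being weak*-dense in $B_{Lip_0(K)}$ for the topology of pointwise convergence (which on bounded sets coincides with the weak* topology induced by $\F{K}$). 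Granting this, $\norm{T\mu}=\sup_{f\in B_{lip_0(K)}}|\langle f,\mu\rangle|=\sup_{f\in B_{Lip_0(K)}}|\langle f,\mu\rangle|=\norm{\mu}_{\F{K}}$ for every $\mu$, using that $f\mapsto\langle f,\mu\rangle$ is weak*-continuous.

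The first genuine step is to prove that $lip_0(K)$ separates the points of $K$ uniformly, with constants tending to $1$: for all $x\neq y$ and $\varepsilon>0$ there is $f\in lip_0(K)$ with $\norm{f}_L\leq 1+\varepsilon$ and $f(x)-f(y)=d(x,y)$. This is where countability and compactness enter. Writing $K$ through its Cantor--Bendixson derivatives $K^{(\alpha)}$, which for a countable compact space are finite from some ordinal on, every point sits at a well-defined isolation scale, and the distances realised near any point are discrete away from $0$. Exploiting this, the truncated distance function $z\mapsto\max(0,d(x,y)-d(z,x))$ can be flattened at a summable sequence of small scales adapted to the derived-set structure, producing a little-Lipschitz peak of controlled norm. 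A gluing argument over the compact space then upgrades uniform separation to the weak*-density of $B_{lip_0(K)}$ in $B_{Lip_0(K)}$, which by the previous paragraph makes $T$ an isometric embedding.

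The main obstacle is surjectivity, since weak*-density of the ball alone does not force a predual to coincide with the whole dual (as the pair $c_0\subset\ell_\infty$ shows, where nonetheless $c_0^*=\ell_1$). The strategy I would follow is to promote the density lemma to the canonical isometric identification $lip_0(K)^{**}=Lip_0(K)$: the weak*-density of $B_{lip_0(K)}$ in $B_{Lip_0(K)}$, together with the separability of $lip_0(K)$ (which holds since $K$ is countable compact), identifies the bidual of $lip_0(K)$ with $Lip_0(K)$ and exhibits $lip_0(K)$ as an M-embedded subspace. Since the inclusion $lip_0(K)\hookrightarrow Lip_0(K)$ is isometric, its adjoint $Lip_0(K)^*\to lip_0(K)^*$ is a metric quotient, and its restriction to $\F{K}\subseteq\F{K}^{**}=Lip_0(K)^*$ is exactly $T$. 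A successive-approximation argument, starting inside $\F{K}^{**}$ via Goldstine and correcting the error at each stage through the metric-quotient property, would then produce for any $\phi\in lip_0(K)^*$ a norm-convergent series of molecules $\sum_n\mu_n\in\F{K}$ with $T(\sum_n\mu_n)=\phi$, the point masses $\delta_x$ generating $\F{K}$ ensuring the representatives lie in $\F{K}$. Checking that each correction can be made with summable norm control --- equivalently, that $T(B_{\F{K}})$ is norm-dense in $B_{lip_0(K)^*}$ rather than merely weak*-dense --- is the crux, and is precisely where the M-embedded structure coming from the countability of $K$ must be used.
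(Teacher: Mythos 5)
There is a genuine gap, and you locate it yourself: surjectivity. Your plan is to deduce norm-density of $T(B_{\F{K}})$ in $B_{lip_0(K)^*}$ from the identification $lip_0(K)^{**}=Lip_0(K)$ together with $lip_0(K)$ being M-embedded, but neither claim is proved, and the first is essentially the theorem itself in disguise: since $Lip_0(K)=\F{K}^*$, the canonical identification $lip_0(K)^{**}\equiv Lip_0(K)$ amounts to $lip_0(K)^*\cong\F{K}$, which is what you are trying to establish. Weak$^*$-density of $B_{lip_0(K)}$ in $B_{Lip_0(K)}$ plus separability does not by itself yield this bidual identification (it yields only that $\F{K}$ embeds isometrically into $lip_0(K)^*$), and no argument is offered for M-embeddedness of $lip_0(K)$ in $Lip_0(K)$ for a general countable compactum. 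So the successive-approximation scheme has no engine. The paper closes exactly this hole by a different mechanism: the Petun\={\i}n--Pl\={\i}\v{c}hko theorem, which says that if $X$ is separable and $S\subset X^*$ is a closed subspace that separates points of $X$ and consists of norm-attaining functionals, then $X\cong S^*$ isometrically. The one ingredient your proposal never touches is precisely the norm-attainment lemma: for $f\in lip_0(K)$, $f\neq 0$, the set $K^2_\varepsilon=\{(x,y):x\neq y,\ |f(x)-f(y)|\geq\varepsilon\,d(x,y)\}$ is compact for suitable $\varepsilon>0$, so the Lipschitz quotient attains its supremum there and $f$ attains its norm on $B_{\F{K}}$ at a normalized elementary molecule. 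This is cheap (pure compactness) and, combined with separation, delivers injectivity, surjectivity, and isometry all at once.

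A second, related overreach is your separation claim with constants $1+\varepsilon$. The paper's construction --- flattening the function $\varphi\circ d(\cdot,x)$ at scales dictated by the Cantor--Bendixson derivatives of $\overline{B}(x,a/2)$, with the induction terminating because the ordinal ranks strictly decrease --- only yields a fixed constant $c=2\prod_{j=1}^{\infty}\bigl(1+\frac{1}{2^j-1}\bigr)$, where the factor $2$ is intrinsic to working in the half-ball around $x$. Separation with constant $1+\varepsilon$ is indeed true \emph{a posteriori} (once $\F{K}\equiv lip_0(K)^*$ is known, Goldstine gives weak$^*$-density of $B_{lip_0(K)}$ in $B_{Lip_0(K)}$), but your sketch does not prove it, and your architecture needs it as an \emph{input} to get isometry of $T$, since a $c$-norming subspace only gives an isomorphic embedding. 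The paper sidesteps this entirely: uniform separation with \emph{some} constant suffices, because it implies $lip_0(K)$ is separating (via the finite-set interpolation of Lemma 3.2.3 in Weaver's book and weak$^*$-density of the subspace, Lemma \ref{L2.4}), and the isometric identification then comes for free from Petun\={\i}n--Pl\={\i}\v{c}hko rather than from $1$-norming. To repair your proof, replace the M-embedding strategy by the norm-attainment lemma and this duality theorem; the CB-derivation flattening argument you sketch for the separation step is then adequate with any uniform constant.
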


\smallskip

\begin{definition}~\
 \begin{enumerate}
  \item Let $X$ be a Banach space. A subspace $S$ of $X^*$ is called separating if $x^*(x)=0$ for all $x^*\in S$ implies $x=0$.
  \item  For $(M,d)$ a pointed metric space,  $lip_0(M)$ separates points uniformly if there exists a constant $c\geq1$ such that for every $x,y\in M$, some $f\in lip_0(M)$ satisfies $\norm{f}_L\leq c$ and $|f(x)-f(y)|=d(x,y)$. 
 \end{enumerate}
\end{definition}

Mimicking an argument from \cite{G1} we will use a theorem due to Petun$\bar{\i}$n and Pl$\bar{\i}\mathrm{\check{c}}$hko \cite{PP} saying that  if $(X,\norm{\cdot})$ is a separable Banach space and $S$ a closed subspace of $X^*$ contained in  $NA(X)$ (the subset of $X^*$ consisting of all linear forms which attain their norm) and separating points of $X$, then $X$ is isometrically isomorphic to $S^*$. Theorem 3.3.3 in \cite{W} gives the same result but in a less general case.

\medskip

We start with two lemmas taken from \cite{G1}.

\begin{lemma}
 For any $(K,d)$ compact pointed metric space, the space $lip_0(K)$ is a subset of $NA(\F{K})$.
\end{lemma}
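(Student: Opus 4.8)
The plan is to show that every $f\in lip_0(K)$ attains its norm on the unit ball of $\F{K}$ by producing an explicit norming vector among the elementary molecules. Since $\delta$ is an isometry, for $x\neq y$ the molecule $m_{xy}:=\frac{\delta_x-\delta_y}{d(x,y)}$ is a unit vector of $\F{K}$, and under the duality $\F{K}^*=Lip_0(K)$ the evaluation pairing gives $\langle f,m_{xy}\rangle=\frac{f(x)-f(y)}{d(x,y)}$. Hence $\norm{f}_L=\sup_{x\neq y}\langle f,\pm m_{xy}\rangle$, and it suffices to prove that this supremum is \emph{attained} at some pair $x_0\neq y_0$; the corresponding $\pm m_{x_0y_0}$ is then a norm-one element on which $f$ attains its norm, so $f\in NA(\F{K})$.

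First I would dispose of the trivial case $\norm{f}_L=0$, where $f\equiv 0$ attains its norm at every point. Assuming $L:=\norm{f}_L>0$, I would choose pairs $(x_n,y_n)$ with $x_n\neq y_n$ and $\frac{|f(x_n)-f(y_n)|}{d(x_n,y_n)}\to L$. Using the compactness of $K$, I pass to subsequences so that $x_n\to x_0$ and $y_n\to y_0$ in $K$, and then analyze the two possibilities for the limit points.

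If $x_0\neq y_0$, the argument is immediate: $d(x_n,y_n)\to d(x_0,y_0)>0$ and, $f$ being continuous, $\frac{|f(x_n)-f(y_n)|}{d(x_n,y_n)}\to\frac{|f(x_0)-f(y_0)|}{d(x_0,y_0)}=L$, so the supremum is attained at $(x_0,y_0)$ and we are done. The heart of the proof, and the place where the $lip_0$ hypothesis is indispensable, is ruling out the degenerate case $x_0=y_0$, in which $d(x_n,y_n)\to 0$. Here I invoke the defining property of $lip_0(K)$: fixing $\varepsilon$ with $0<\varepsilon<L$ yields a $\delta>0$ such that $d(x,y)<\delta$ forces $|f(x)-f(y)|\leq\varepsilon\, d(x,y)$; for $n$ large the pairs satisfy $d(x_n,y_n)<\delta$, whence $\frac{|f(x_n)-f(y_n)|}{d(x_n,y_n)}\leq\varepsilon<L$, contradicting convergence to $L$. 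Thus the maximizing pairs cannot collapse onto the diagonal, so the first case must occur.

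The main obstacle is precisely this case analysis. For a general Lipschitz function the supremum defining $\norm{f}_L$ may genuinely fail to be attained, because the extremizing pairs can run off to the diagonal where the difference quotient is controlled only by the global Lipschitz constant; it is exactly the little-Lipschitz condition, working together with the compactness of $K$ that allows the extraction of convergent subsequences, that forecloses this escape and forces norm attainment.
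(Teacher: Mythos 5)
Your proof is correct and rests on essentially the same mechanism as the paper's: compactness of $K$ combined with the little-Lipschitz condition prevents the extremizing pairs from collapsing onto the diagonal, so the supremum defining $\norm{f}_L$ is attained at some $x_0\neq y_0$ and the unit molecule $\pm\frac{\delta_{x_0}-\delta_{y_0}}{d(x_0,y_0)}$ norms $f$. The paper merely packages the same compactness argument differently, observing that for $f\in lip_0(K)$ the level set $K^2_{\varepsilon}=\{(x,y):x\neq y,\ |f(x)-f(y)|\geq \varepsilon\, d(x,y)\}$ is compact and the difference quotient is continuous there, whereas you run the equivalent maximizing-sequence extraction directly.
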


\begin{proof}
 We can see $lip_0(K)$ as the subset of $Lip_0(K)$ containing all $f$ such that for every $\varepsilon >0$, the set $K^2_{\varepsilon}:=\left\lbrace (x,y)\in K^2,\ x\neq y,\ |f(x)-f(y)|\geq \varepsilon \  d(x,y)\right\rbrace $ is compact.

 Let $f\in lip_0(K)$, we may assume that $f\neq 0$, then there exists $\varepsilon>0$ such that,
\begin{align*}
 \norm{f}_L&=\sup_{x\neq y} \frac{|f(x)-f(y)|}{d(x,y)}=\sup_{(x, y)\in K_{\varepsilon}^2} \frac{|f(x)-f(y)|}{d(x,y)}=\max_{(x, y)\in K_{\varepsilon}^2} \frac{|f(x)-f(y)|}{d(x,y)}
\end{align*}
Thus there exist $x\neq y$ such that $\norm{f}_L=\frac{|f(x)-f(y)|}{d(x,y)}$ and setting $\gamma =\frac{1}{d(x,y)}(\delta_x-\delta_y)$ we obtain $\gamma  \in \F{K}$ and $\norm{f}_L=|f(\gamma)| $, with $\norm{\gamma}_{\F{K}}=1$ because $\delta$ is an isometry. Then $f$ is norm attaining and $lip_0(K)\subset NA(\F{K})$.
\end{proof}

\begin{lemma}\label{L2.4}For any $(K,d)$ compact pointed metric space, if $lip_0(K)$ separates points uniformly, then it is separating.
\end{lemma}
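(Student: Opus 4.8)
The plan is to deduce separation from a quantitative \emph{norming} estimate, which is in fact what the argument naturally produces. Concretely, I would aim to show that if $lip_0(K)$ separates points uniformly with constant $c$, then for every $\mu\in\F{K}$ one has $\|\mu\|_{\F{K}}\le c\,\sup\{\langle f,\mu\rangle : f\in lip_0(K),\ \|f\|_L\le 1\}$. Once this is available the lemma is immediate: if $\langle f,\mu\rangle=0$ for every $f\in lip_0(K)$, the right-hand side vanishes, so $\|\mu\|=0$ and $\mu=0$, which is exactly the assertion that $lip_0(K)$ is separating over $\F{K}$.

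Since both sides of the proposed inequality are continuous (indeed Lipschitz) functions of $\mu$ and the finitely supported elements are dense in $\F{K}$, it suffices to establish it when $\mu$ is finitely supported, say $\mu=\sum_{i}a_i\delta_{x_i}$ with $S:=\{x_i\}\cup\{0\}$ finite. First I would pick $g\in Lip_0(K)$ with $\|g\|_L\le 1$ and $\langle g,\mu\rangle=\|\mu\|$. The whole problem then reduces to a \emph{finite interpolation} statement: there exists $f\in lip_0(K)$ with $f|_S=g|_S$ and $\|f\|_L\le c$. Granting this, $\langle f,\mu\rangle=\langle g,\mu\rangle=\|\mu\|$ because $\mu$ is supported on $S$, while $f/c\in lip_0(K)$ has Lipschitz norm at most $1$, so the supremum above is at least $\|\mu\|/c$, as required.

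The main obstacle is this interpolation step, namely producing a \emph{little-}Lipschitz function matching prescribed values on a finite set with controlled norm. The naive McShane extension $x\mapsto\max_i\big(g(x_i)-\|g\|_L\,d(x,x_i)\big)$ has the right Lipschitz constant and the right values on $S$, but it is built from the distance functions $d(\cdot,x_i)$, which in general do not lie in $lip_0(K)$. I would repair this by replacing each $d(\cdot,x_i)$ by a little-Lipschitz substitute assembled from the functions provided by uniform separation: for each pair in $S$ the hypothesis yields some $f\in lip_0(K)$ with $\|f\|_L\le c$ realizing the corresponding distance, and these can be combined using that $lip_0(K)$ is stable under the lattice operations $\max$ and $\min$, which do not increase the Lipschitz constant and preserve the little-Lipschitz condition. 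Compactness of $K$ enters to control these substitutes uniformly near $S$ and to guarantee that the finite combination still satisfies the little-Lipschitz condition globally. Carrying out this gluing so that the final Lipschitz constant stays within a fixed multiple of $\|g\|_L$ (absorbable into $c$) is the delicate, technical part; everything else is bookkeeping.
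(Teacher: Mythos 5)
Your proposal is correct and hinges on exactly the same key fact as the paper, namely the finite interpolation property: if $lip_0(K)$ separates points uniformly with constant $c$, then for every finite $S\subset K$ and every $g\in Lip_0(K)$ there is $f\in lip_0(K)$ with $f_{|S}=g_{|S}$ and $\norm{f}_L\leq c\norm{g}_L$. But the surrounding route is genuinely different. The paper argues by duality: by Hahn--Banach, $lip_0(K)$ is separating if and only if it is weak$^*$-dense in $Lip_0(K)=\F{K}^*$, and the interpolation property (which the paper does not prove, citing Lemma 3.2.3 of Weaver) produces a bounded net converging pointwise, hence weak$^*$, to any $f\in Lip_0(K)$. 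You instead prove the quantitative norming inequality $\norm{\mu}\leq c\,\sup\{\langle f,\mu\rangle : f\in lip_0(K),\ \norm{f}_L\leq 1\}$ on the dense set of finitely supported $\mu$, using weak$^*$-compactness of the unit ball of $Lip_0(K)$ to pick a norm-attaining $g$ and then interpolating on the support; this dual-free argument is sound and in fact buys more than the lemma asks: it shows $lip_0(K)$ $c$-norms $\F{K}$, which is close to the substance needed for the Petun$\bar{\i}$n--Pl$\bar{\i}\mathrm{\check{c}}$hko duality theorem used in Theorem 2.1, whereas the paper's version yields separation only. The one step you leave as a sketch --- constructing the little-Lipschitz interpolant --- is precisely the step the paper outsources to Weaver, and it closes more simply than your mollified-McShane plan suggests: for each pair $p\neq q$ in $S$ take $h_{pq}\in lip_0(K)$ with $\norm{h_{pq}}_L\leq c$ and $|h_{pq}(p)-h_{pq}(q)|=d(p,q)$, rescale affinely by setting $g_{pq}=g(q)+\frac{g(p)-g(q)}{h_{pq}(p)-h_{pq}(q)}\left(h_{pq}-h_{pq}(q)\right)$, so that $g_{pq}(p)=g(p)$, $g_{pq}(q)=g(q)$ and $\norm{g_{pq}}_L\leq c\norm{g}_L$, and put $f=\max_{p\in S}\min_{q\in S}g_{pq}$; lattice operations on finitely many functions preserve both the Lipschitz bound and the little-Lipschitz condition, and including $0$ in $S$ forces $f(0)=g(0)=0$. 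In particular no little-Lipschitz substitutes for the distance functions $d(\cdot,x_i)$ and no compactness of $K$ are needed at this point, so the ``delicate, technical part'' you flag is in fact routine.
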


\begin{proof}

Using Hahn-Banach theorem, one can prove that $lip_0(K)$ is separating if and only if  it is weak$^*$- dense in $Lip_0(K)$.

\medskip

Now assume $lip_0(K)$ separates points uniformly. Then there exists $c\geq 1$ such that for every $F\subset K$, $F$ finite, and every $f\in Lip_0(K)$, we can find $g\in lip_0(K)$, $\norm{g}_L\leq c\norm{f}_L$, such that $f_{|F}=g_{|F}$ (see Lemma 3.2.3 in \cite{W}), and it is easy to deduce that $\overline{lip_0(K)} \ ^{w^*}=Lip_0(K)$.

\end{proof}

These lemmas allow us to reduce the problem. We need to prove that the little Lipschitz space over a countable compact metric space separates points uniformly.

\medskip

For this proof we will use a characterization of countable compact metric spaces with the Cantor-Bendixon  derivation:
for a metric space $(M,d)$ we denote 
\begin{itemize}
\item $M'$ the set of accumulation points of $M$.
\item $M^{(\alpha)}=(M^{(\alpha -1)})'$, for a successor ordinal $\alpha$.
\item $M^{(\alpha)}=\bigcap\limits_{\beta<\alpha}M^{(\beta)}$, for a limit ordinal $\alpha$.
\end{itemize}
A compact metric space $(K,d)$ is countable if and only if there is a countable ordinal $\alpha$ such that $K^{(\alpha)}$ is finite.

\renewcommand{\proofname}{Proof of Theorem 2.1:}
\begin{proof}

Let us prove that $$\exists c\geq1, \ \forall x,y\in K, \ \exists h\in  lip_0(K), \ \norm{h}_L\leq c, \ |h(x)-h(y)|=d(x,y).$$

\medskip

So let $x\neq y \in K$ and set $a=d(x,y)$.
Since $K$ is countable and compact the closed ball $\overline{B}\left(  x, \frac{a}{2}\right)$ of center $x$ and radius $\frac{a}{2}$  is countable and compact and there exists a countable ordinal $\alpha_0$ such that $\overline{B}\left(  x, \frac{a}{2}\right)^{(\alpha_0)}$ is finite and non empty: there exist $k_1\in \N$, $y^1_1, \cdots, y_1^{k_1}\in K$ such that 
$	\overline{B}\left(  x, \frac{a}{2}\right)^{\left(\alpha_0\right)}=\{y^1_1, \cdots, y_1^{k_1}\}$ .
We denote $a^i_1=d(y^i_1,x)$, for $1\leq i\leq k_1$.\  Then we can find $r_1$ and $v_1^1< \cdots < v^{r_1}_1$ such that 
$	\{a^1_1, \cdots, a_1^{k_1}\}=\{v_1^1, \cdots , v^{r_1}_1\}.	$
Now set 
$$
v_1 = \left\{
    \begin{array}{ll}
        a/2, &\!\!\!\!\!\!\!    \textrm{\ if\ } \overline{B}\left(  x, \frac{a}{2}\right)^{(\alpha_0)}=\{x\}\\
        \min\left(\left(\{v^1_1, \ \frac{a}{2}-v_1^{r_1}\}\backslash\{0\}\right)\cup\{v^i_1-v^{i-1}_1 , \  2\leq i \leq r_1\} \right)\!,&\!\!\!\!\!\!\! \textrm{\ otherwise}
    \end{array}
\right.
$$
and define $\varphi_1 : \left[0,+\infty\right[\rightarrow\left[0,+\infty\right[$ by 

$$
\varphi_1(t) = \left\{
    \begin{array}{ll}
        0 & , \  t \in \left[ 0,\frac{v_1}{4}\right[:=V^0_1  \\
        v^i_1 &,  \ t\in \left]v^i_1-\frac{v_1}{4},v^i_1+\frac{v_1}{4} \right[:=V^i_1 , \  1\leq i \leq r_1\\
	\frac{a}{2} &, \  t\in \left]\frac{a}{2}-\frac{v_1}{4},+\infty \right[:=V_1^{r_1+1} 
    \end{array}
\right.
$$

\noindent and $\varphi_1$ is continuous on $[0,+\infty[$ and affine on each interval of $[0,+\infty[ \  \backslash \displaystyle \cup_{i=0}^{r_1+1}V_1^i$.

One can check that the slope of $\varphi_1$ is at most $2$ on each of these intervals, so $\norm{\varphi_1}_L\leq 2$.

\medskip

With $f(\cdot)=d(\ \cdot\ ,x)$ we set $C_1=f^{-1}\left(	[0,+\infty[\backslash \cup_{i=0}^{r_1+1}V_1^i	\right)$. 

If $C_1$ is finite or empty define $h(\cdot)=2\left( \varphi_1\circ d(\ \cdot\ ,x)-\varphi_1(d(0,x))\right)$. It is clear from the definition of $\varphi_1$ that $\norm{h}_L\leq 4$, $|h(x)-h(y)|=d(x,y)$ and $h(0)=0$. Now we set 
$$\delta = \left\{  \begin{array}{ll}
v_1/2, &\textrm{if}\ C_1=\emptyset\\
\cfrac{1}{2}\min \left( \left\{ v_1, \textrm{sep}(C_1)  \right\}\cup \{\textrm{dist}(z, K\backslash C_1), \ z\in  D_1\} \right), &\textrm{otherwise}
\end{array}\right.$$
where $\textrm{sep}(C_1)=\inf\{d(z,t), \ z\neq t, \ z,t\in C_1\}$ and $D_1=f^{-1}\left(	[0,+\infty[\backslash \cup_{i=0}^{r_1+1}\overline{V_1^i}	\right)$. 
Note that $\delta >0$. Indeed $v_1>0$, $C_1$ is finite thus $\textrm{sep}(C_1)>0$, for any $z\in D_1$ $\textrm{dist}(z,K\backslash C_1)>0$ and $D_1$ is finite.

If follows that every $z\neq t\in K$ such that $d(z,t)\leq \delta$ are not in $D_1$ and there exists $i\leq r_1$ such that $z,t \in f^{-1}\left(\overline{V_1^i}\right)$, so the equality $h(z)=h(t)$ holds, i.e. $h\in lip_0(K)$.

\medskip

Assume that $C_1$ is infinite. 
Since $C_1\subset \overline{B}\left(  x, \frac{a}{2}\right)$ we have that for every ordinal $\alpha$, $C_1^{(\alpha)}\subset \overline{B}\left(  x, \frac{a}{2}\right)^{(\alpha)}$. But $C_1\cap \overline{B}\left(  x, \frac{a}{2}\right)^{(\alpha_0)}=\emptyset $ so $C_1^{(\alpha_0)}=\emptyset$. However $C_1$ is compact, thus
there exists $1\leq \alpha_1<\alpha_0$ so that $C_1^{(\alpha_1)}$ is finite and non empty. Then there exist $k_{2}\in \N$ and $  y_{2}^1, \cdots, y_{2}^{k_{2}}\in K$, such that $C_1^{(\alpha_1)}=\lbrace y_{2}^1,\cdots, y_{2}^{k_{2}}\rbrace $.

For $1\leq i\leq k_{2}$, we denote $a^i_{2}=d(y^i_{2},x)$. We can find $r_{2}$ and $v^1_{2}< \cdots < v^{r_{2}}_{2}$ such that 
$$	\{a_{2}^1, \cdots , a_{2}^{k_{2}}\}=\{v^1_{2}, \cdots , v^{r_{2}}_{2}\}.	$$
Now set $$v_{2}=\min\left(\{ v_1 ,\ v_{2}^1\}\cup\{v^i_{2}-v^{i-1}_{2} , \  2\leq i \leq r_{2}\}\right) $$ and define $\varphi_{2} : \left[0,+\infty\right[\rightarrow\left[0,+\infty\right[$ continuous by 

$$
\varphi_{2}(t) = \left\{
    \begin{array}{ll}
        \varphi_1(t) & , \  t \in \displaystyle \bigcup_{i=0}^{r_1+1}V^i_1  \\
        \varphi_1(v^i_{2}) &, \  t\in \left]v^i_{2}-\frac{v_{2}}{2^{3}},v^i_{2}+\frac{v_{2}}{2^{3}} \right[:=V^i_{2} , \ 1\leq i \leq r_{2}\\
    \end{array}
\right.
$$
 and $\varphi_2$ is affine on each interval of 
$[0,+\infty[\backslash\!\! \left(\left(\cup_{i=0}^{r_1+1}V_1^i\right) \cup\left(\cup_{i=1}^{r_2}V_2^i\right)\right).$

The Lipschitz constant of $\varphi_{2}$ equals the maximum between $\norm{\varphi_1}_L$ and new slopes of $\varphi_{2}$. It is easy to check that  $\norm{\varphi_{2}}_L\leq  2\times (1+\frac{1}{3})=\frac{8}{3}$.

Set $C_2=f^{-1}( [\frac{v_1}{4},\frac{a}{2}-\frac{v_1}{4} ] \backslash \left(\left(\cup_{i=1}^{r_1}V_1^i\right) \cup\left(\cup_{i=1}^{r_2}V_2^i\right)\right) )$.

If $C_2$ is finite or empty then setting $h(\cdot)=2\left( \varphi_2\circ d(\ \cdot\ ,x)-\varphi_2(d(0,x))\right)$, we obtain $\norm{h}_L\leq \frac{16}{3} $, $|h(x)-h(y)|=d(x,y)$, $h(0)=0$ and with 
$$0<\delta = \left\{  \begin{array}{ll}
v_2/2, &\textrm{if}\ C_2=\emptyset\\
\cfrac{1}{2}\min \left( \{ v_2  , \textrm{sep}(C_2)\} \cup \{ \textrm{dist}(z,K\backslash C_{2}), \  z\in D_2 \} \right), &\textrm{otherwise}
\end{array}\right.$$

\noindent where $D_2=f^{-1}\left( [\frac{v_1}{4},\frac{a}{2}-\frac{v_1}{4} ] \backslash \left(\left(\cup_{i=1}^{r_1}\overline{V_1^i}\right) \cup\left(\cup_{i=1}^{r_2}\overline{V_2^i}\right)\right) \right)$.

\noindent When $z,t\in K$ are such that $d(z,t)\leq \delta$, then $h(z)=h(t)$, i.e. $h\in lip_0(K)$.

\medskip

If $C_2$ is infinite we proceed inductively in a similar way until we get $C_n$ finite, which eventually happens because we have a decreasing sequence of ordinals.

\medskip

The function $h$ we obtain verifies $h(0)=0$, $|h(y)-h(x)|=d(x,y)$ and 
$$ \norm{h}_L\leq \displaystyle 2\prod_{j=1}^{n}\left(1+\cfrac{1}{2^j-1} \right)\leq \displaystyle 2\prod_{j=1}^{+\infty}\left(1+\cfrac{1}{2^j-1} \right):=c
$$
\noindent where $c$ does not depend on $x$ and $y$.
Moreover, setting 
$$0<\delta = \left\{  \begin{array}{ll}
v_n/2, &\textrm{if}\ C_n=\emptyset\\
\cfrac{1}{2}\left(\min \{ v_n  , \textrm{sep}(C_n) \} \cup \{ \textrm{dist}(z,K\backslash C_{n}), \  z\in D_n \} \right), &\textrm{otherwise}
\end{array}\right.$$
\noindent if $z,t\in K$ are such that $d(z,t)\leq \delta$, then $h(z)=h(t)$, i.e. $h\in lip_0(K)$.

This concludes the proof.

\end{proof}
\renewcommand{\proofname}{Proof:}


\section{Metric Approximation Property}

\begin{theorem}\label{Thm}
Let $(K,d)$ be a countable compact metric space. Then $\mathcal{F}(K)$ has the metric approximation property.
\end{theorem}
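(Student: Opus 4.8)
The plan is to obtain the metric approximation property from the bounded approximation property by way of duality, and to establish the bounded approximation property by transfinite induction on the Cantor--Bendixson rank. By Theorem~\ref{Thmdual} the space $\F{K}$ is isometric to $lip_0(K)^*$, hence it is a \emph{separable dual} space. A theorem of Grothendieck \cite{Gr} asserts that a separable dual space with the BAP automatically has the MAP; therefore it suffices to prove that $\F{K}$ has the BAP.

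I would prove the following statement by induction on the least ordinal $\alpha<\omega_1$ for which $K^{(\alpha)}$ is finite: for every countable compact metric space $K$ with $K^{(\alpha)}$ finite, $\F{K}$ has the MAP. The base case $\alpha=0$ is immediate, since then $K$ is finite, $\F{K}$ is finite-dimensional, and the MAP is trivial. For the inductive step, assume the statement for every countable compact space whose rank is some $\beta<\alpha$, and let $K$ satisfy $K^{(\alpha)}=\{p_1,\dots,p_m\}$.

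The idea is to slice $K$ into annuli around the finitely many top-rank points. Choosing $\rho>0$ so small that the balls $\overline{B}(p_j,\rho)$ are pairwise disjoint, one cuts a neighbourhood of each $p_j$ into annuli $A^j_n=\{x: 2^{-n-1}\rho< d(x,p_j)\leq 2^{-n}\rho\}$. Each $A^j_n$ is compact and disjoint from $K^{(\alpha)}=\bigcap_{\beta<\alpha}K^{(\beta)}$; by compactness one has $A^j_n\cap K^{(\beta)}=\emptyset$ for some $\beta<\alpha$, so $A^j_n$ (pointed at $p_j$, an isolated point of $A^j_n\cup\{p_j\}$, which does not raise the rank) has Cantor--Bendixson rank strictly smaller than $\alpha$. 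The induction hypothesis then gives that each $\F{A^j_n}$ has the MAP, in particular the $1$-BAP, uniformly over $j$ and $n$. Invoking Kalton's decomposition \cite{K1}, I would identify $\F{K}$, up to a uniformly bounded isomorphism, with an $\ell_1$-type sum of the free spaces $\F{A^j_n}$ together with a finite-dimensional contribution coming from the $p_j$'s (the geometric separation of scales is what controls the cross-terms and keeps the isomorphism constant bounded). Since the BAP is preserved under $\ell_1$-sums with uniformly bounded constants, under isomorphisms, and under passing to complemented subspaces, it follows that $\F{K}$ has the BAP; combining this with Grothendieck's theorem as above yields the MAP for $\F{K}$, closing the induction.

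The main obstacle is the inductive step: one must set up Kalton's decomposition so that the pieces genuinely have strictly smaller rank and so that the ambient isomorphism constant and the BAP constants of the infinitely many annuli stay uniformly bounded. The rank bookkeeping, especially at a limit ordinal $\alpha$, where the compactness argument producing a single $\beta<\alpha$ with $A^j_n\cap K^{(\beta)}=\emptyset$ is essential, together with the verification that the localized pieces reassemble into $\F{K}$ with controlled norms, is where the real work lies; the passage from BAP to MAP, by contrast, is a soft consequence of duality.
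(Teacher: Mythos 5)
Your skeleton matches the paper's: duality (Theorem~\ref{Thmdual}) plus Grothendieck's theorem to reduce MAP to BAP, then transfinite induction on the Cantor--Bendixson rank using Kalton's construction. But the inductive step, as you set it up, has a genuine gap. You claim to ``identify $\F{K}$, up to a uniformly bounded isomorphism, with an $\ell_1$-type sum of the free spaces $\F{A^j_n}$'' via Kalton's decomposition. Kalton's Lemma 4.2, as quoted in the paper, gives something much weaker: operators $S_N:\F{K}\rightarrow\F{F_N}$, where $F_N=\{x:\,2^{-N-1}\le d(0,x)\le 2^{N+1}\}$, with $\sup_N\norm{S_N}\le 72$ and $S_N\gamma\rightarrow\gamma$ for every $\gamma\in\F{K}$. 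No isomorphism with, nor even a complemented embedding into, $\left(\sum_n\F{F_N}\right)_{\ell_1}$ follows formally from this: the bound $\sup_N\norm{\sum_{n=-N}^{N}T_n}\le 72$ does not yield the absolute estimate $\sum_n\norm{T_n\gamma}\le C\norm{\gamma}$ that an $\ell_1$-embedding would require, and an actual isomorphism with the sum is an overclaim in any case. The point you miss is that no such identification is needed: since each $F_N$ is compact and disjoint from $K^{(\alpha)}$, there is $\beta<\alpha$ with $F_N^{(\beta)}$ finite, so by the induction hypothesis $\F{F_N}$ has the MAP; being separable, it admits finite-rank operators $R^N_p$ with $\norm{R^N_p}\le 1$ converging pointwise to the identity, and the compositions $Q_{N,p}=R^N_p\circ S_N$ are finite rank, of norm at most $72$, and converge pointwise to the identity on $\F{K}$. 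This gives the $72$-BAP directly, and Grothendieck finishes.

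The second gap is the multi-center setup. Kalton's construction is anchored at the single base point $0$: the weights defining $T_n$ are functions of $d(0,x)$. Running annuli around several top-rank points $p_1,\dots,p_m$ simultaneously would require a multi-center variant that \cite{K1} does not contain; moreover your balls $\overline{B}(p_j,\rho)$ are disjoint but not clopen, so the outermost annuli meet the ``far'' part of $K$ at distance zero, and nothing you invoke controls the cross-terms there. The paper resolves exactly this with Lemma~\ref{Lem3.3}: since the singletons $\{a_i\}$ are clopen in the finite set $K^{(\alpha)}$, one obtains a clopen partition $K=G_1\cup\dots\cup G_n$ with $G_i^{(\alpha)}=\{a_i\}$; clopen compact pieces are at positive distance from one another, which yields an honest finite decomposition $\F{K}\simeq\left(\oplus_{i=1}^n\F{K_i}\right)_{\ell_1}$ (via the weak$^*$-continuous restriction operator on $Lip_0(K)$) and reduces everything to the case $K^{(\alpha)}=\{0\}$, where Kalton's lemma applies verbatim. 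Two minor further points: your half-open annuli $A^j_n$ are not compact, so the rank argument must be run on closed (overlapping) annuli as in Kalton's $F_N$; and at a successor ordinal $\alpha=\beta+1$ compactness does not give $A\cap K^{(\beta)}=\emptyset$ but only that $A^{(\beta)}$ is compact and discrete, hence finite --- which is what the induction actually needs. With these repairs --- clopen partition first, then $S_N$ composed with the lower-rank MAP operators instead of an $\ell_1$-sum identification --- your induction closes as in the paper.
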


Before starting the proof let us recall a construction due to N. Kalton \cite{K1}.

\medskip 

Let $(K,d)$ be an arbitrary pointed metric space and set
\begin{align*}
K_n&=\left\lbrace x\in K, \  d(0,x)\leq 2^n\right\rbrace \textrm{\ and \ } O_n=\left\lbrace x\in K, \  d(0,x)< 2^n\right\rbrace, \ n\in \Z\\
F_N&=K_{N+1}\backslash O_{-N-1}, \ N\in \N.
\end{align*}

Then, for every $n\in \Z$, we can define a linear operator $T_n : \F{K}\rightarrow \F{K}$ by:

$$
T_{n}\delta (x) = \left\{
    \begin{array}{ll}
        0 & , \  x \in K_{n-1}  \\
        \left(\log_2 d(0,x)-(n-1) \right)\delta (x)  &, \  x\in K_n\backslash K_{n-1}\\
	\left( n+1-\log_2d(0,x)\right) \delta (x) 	&, \  x\in K_{n+1}\backslash K_n\\
	0 &,  \ x  \notin K_{n+1} 
    \end{array}
\right.
.$$

If we set for $N\in \N$, $S_N=\displaystyle\sum_{n=-N}^{N}T_n$, then Lemma 4.2 in \cite{K1} gives: 
\begin{lemma}
For every $N\in \N$, we have $\norm{S_N}\leq 72$,  $S_N\left( \F{K}\right)
\subset \F{F_N} $ and
for every $\gamma \in \F{K}$, $\displaystyle\lim_{N\rightarrow+\infty}S_N\gamma=\gamma$.
\end{lemma}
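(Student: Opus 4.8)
The plan is to recognise each $T_n$, and hence $S_N$, as a \emph{weighted} (multiplier) operator and to carry out all three verifications on the predual $Lip_0(K)$ via the adjoint. Writing $t=\log_2 d(0,x)$ and letting $g_n$ be the piecewise-affine ``hat'' that is $0$ outside $[n-1,n+1]$, equals $t-(n-1)$ on $[n-1,n]$ and $n+1-t$ on $[n,n+1]$, the definition of $T_n$ says exactly that $T_n\delta(x)=g_n(\log_2 d(0,x))\,\delta(x)$. Since the hats form a partition of unity, $G_N:=\sum_{n=-N}^N g_n$ is the trapezoidal function that equals $1$ on $[-N,N]$, is supported on $[-N-1,N+1]$ and is $1$-Lipschitz; consequently $S_N\delta(x)=\Phi_N(x)\,\delta(x)$, where $\Phi_N(x):=G_N(\log_2 d(0,x))$ satisfies $0\le\Phi_N\le 1$. (Note that nothing here uses countability or compactness, consistent with the construction being made for an arbitrary pointed $K$.)

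First I would set up the duality. The candidate adjoint is the multiplication map $R\colon Lip_0(K)\to Lip_0(K)$, $Rf=\Phi_N f$, which is well defined since $f(0)=0$ forces $(\Phi_N f)(0)=0$. The map $R$ is weak${}^*$--weak${}^*$ continuous, because weak${}^*$ convergence in $Lip_0(K)=\F{K}^*$ is pointwise convergence on the $\delta(x)$ and $(\Phi_N f)(x)=\Phi_N(x)f(x)$ depends continuously on $f(x)$; hence $R=U^*$ for a bounded $U$ on $\F{K}$ with $\norm{U}=\norm{R}$, and evaluating against $\delta(x)$ gives $U\delta(x)=\Phi_N(x)\delta(x)=S_N\delta(x)$, i.e.\ $U=S_N$. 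Thus $\norm{S_N}=\norm{R}$ and it remains to bound the multiplier norm.

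For the norm bound I would use the product rule $\Phi_N f(x)-\Phi_N f(y)=\Phi_N(p)\,(f(x)-f(y))+(\Phi_N(x)-\Phi_N(y))\,f(q)$, choosing $\{p,q\}=\{x,y\}$ so that $q$ is the point closer to $0$. Since $0\le\Phi_N\le 1$ and $\abs{f(q)}\le d(0,q)\norm{f}_L$, everything reduces to the single estimate
$$\sup_{x\ne y}\frac{\min(d(0,x),d(0,y))\,\abs{\Phi_N(x)-\Phi_N(y)}}{d(x,y)}\le \frac{1}{\ln 2}.$$
This is where the logarithmic reparametrisation is the whole point: using that $G_N$ is $1$-Lipschitz and that $\abs{\log_2 d(0,x)-\log_2 d(0,y)}\le \frac{1}{\ln 2}\frac{\abs{d(0,x)-d(0,y)}}{\min(d(0,x),d(0,y))}\le \frac{1}{\ln 2}\frac{d(x,y)}{\min(d(0,x),d(0,y))}$, the minimum cancels and the estimate is uniform in $N$. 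One concludes $\norm{S_N}=\norm{R}\le 1+\frac{1}{\ln 2}$, comfortably below $72$. A naive Lipschitz estimate for $\Phi_N$ would instead produce a constant blowing up near $0$; the $\log_2$ scaling is precisely what keeps it $N$-independent, and this is the main point requiring care.

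Finally, the range inclusion and the convergence follow quickly. If $\Phi_N(x)\ne 0$ then $\log_2 d(0,x)\in(-N-1,N+1)$, i.e.\ $x\in F_N$, so each $S_N\delta(x)=\Phi_N(x)\delta(x)$ lies in $\F{F_N}$, read as the closed span of $\{\delta(z):z\in F_N\}$ in $\F{K}$ (the natural reading, since $0\notin F_N$); by linearity and continuity $S_N(\F{K})\subset\F{F_N}$. For the limit, fix $x\ne 0$: once $2^{-N}\le d(0,x)\le 2^N$ we have $\Phi_N(x)=G_N(\log_2 d(0,x))=1$, hence $S_N\delta(x)=\delta(x)$ for all large $N$. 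Therefore $S_N\gamma\to\gamma$ for every $\gamma$ in the dense linear span of the $\delta(x)$, and the uniform bound $\norm{S_N}\le 72$ upgrades this to convergence on all of $\F{K}$ by the standard $3\varepsilon$-argument.
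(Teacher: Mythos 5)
Your proposal is correct, and it is worth noting that the paper itself gives no proof of this lemma at all: it simply quotes Lemma 4.2 of Kalton's paper \cite{K1}. So you have supplied a genuine self-contained argument where the paper has a citation, and your argument is in the spirit of Kalton's (dualize, and treat $S_N$ as a multiplier on $Lip_0(K)$), but your telescoping observation --- that the hats $g_n$ sum to a single $[0,1]$-valued, $1$-Lipschitz trapezoid $G_N$ in the $\log_2$ coordinate --- streamlines the estimate considerably: your bound $\norm{S_N}\leq 1+\frac{1}{\ln 2}<2.45$ is far sharper than the stated $72$, and your verification of the key inequality (mean value theorem for $\ln$, plus $|d(0,x)-d(0,y)|\leq d(x,y)$, with $f$ evaluated at the point closer to the origin so that the $\min$ cancels) is sound; the range inclusion and the density-plus-uniform-boundedness limit argument are also correct, including the careful reading of $\F{F_N}$ as the closed span of $\{\delta(z):z\in F_N\}$ since $0\notin F_N$. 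One step is stated too loosely: you justify weak$^*$--weak$^*$ continuity of $Rf=\Phi_N f$ by saying weak$^*$ convergence ``is pointwise convergence on the $\delta(x)$'', which is only true for \emph{bounded} nets; for the full claim you need $R$ bounded (which your multiplier estimate gives, so the order of the argument should be: estimate first, continuity second) together with the Krein--\v{S}mulian theorem, which reduces weak$^*$ continuity of the functionals $f\mapsto\langle Rf,\mu\rangle$ to their restrictions to balls. Alternatively you can bypass the adjoint machinery entirely: define $S_N$ on the dense linear span of the $\delta(x)$ by the multiplier formula and estimate $\norm{S_N\mu}=\sup_{\norm{f}_L\leq 1}\langle \Phi_N f,\mu\rangle\leq\left(1+\frac{1}{\ln 2}\right)\norm{\mu}$ directly, then extend by continuity --- nothing in the lemma's statement requires $S_N$ to be an adjoint operator. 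This is a repairable imprecision, not a gap; with that fix your proof is complete and strictly sharper than the quoted constant.
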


In order to prove Theorem \ref{Thm} we need the following classical lemma. We will give its proof for sake of completeness.

\begin{lemma}\label{Lem3.3}
 If for $\alpha$ countable ordinal there exist $F_1,\cdots,F_n$ clopen subsets of $K^{(\alpha)}$, mutually disjoint, such that $K^{(\alpha)}=F_1\cup\cdots\cup F_n$, then there exist $G_1,\cdots,G_n$ clopen subsets of $K$, mutually disjoint, such that $K=G_1\cup\cdots\cup G_n$ and $G_i^{(\alpha)}=F_i$.
\end{lemma}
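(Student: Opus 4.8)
The plan is to prove the statement directly, by combining two facts: that $K$ is zero-dimensional, and that the Cantor--Bendixson derivative commutes with intersecting against a clopen set. This avoids organizing the construction of the $G_i$ as a separate transfinite induction over $\alpha$.

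First I would record the key computation: for any clopen subset $V\subseteq K$ and any ordinal $\gamma$ one has $V^{(\gamma)}=V\cap K^{(\gamma)}$. This is proved by transfinite induction on $\gamma$. The base case is trivial, and the limit case follows by intersecting, since $V^{(\lambda)}=\bigcap_{\gamma<\lambda}V^{(\gamma)}=V\cap\bigcap_{\gamma<\lambda}K^{(\gamma)}$. The only case with content is the successor step: assuming $V^{(\delta)}=V\cap K^{(\delta)}$, one checks that $(V\cap K^{(\delta)})'=V\cap (K^{(\delta)})'$. The inclusion $\subseteq$ uses that $V$ is closed and that every accumulation point of $V\cap K^{(\delta)}$ is an accumulation point of $K^{(\delta)}$; the inclusion $\supseteq$ uses that $V$ is open, so a point of $V$ accumulating $K^{(\delta)}$ already accumulates $V\cap K^{(\delta)}$. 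This gives $V^{(\delta+1)}=V\cap K^{(\delta+1)}$.

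Next I would invoke that a countable compact metric space is totally disconnected, hence zero-dimensional: it admits a basis of clopen sets, and disjoint compact sets can be separated by clopen sets. Applying this to the pairwise disjoint compact sets $F_1,\dots,F_n$, I build the $G_i$ by peeling off one piece at a time: choose a clopen $G_1\supseteq F_1$ disjoint from $F_2\cup\cdots\cup F_n$, then work inside the clopen set $K\setminus G_1$ to obtain $G_2\supseteq F_2$ disjoint from $F_3\cup\cdots\cup F_n$, and so on, finally setting $G_n=K\setminus(G_1\cup\cdots\cup G_{n-1})$. By construction the $G_i$ are clopen, mutually disjoint, cover $K$, and satisfy $F_i\subseteq G_i$ with $G_i\cap F_j=\emptyset$ for $j\neq i$.

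It then remains to verify $G_i^{(\alpha)}=F_i$. Since $K^{(\alpha)}=F_1\cup\cdots\cup F_n$ with $F_i\subseteq G_i$ and the $G_i$ disjoint, we get $G_i\cap K^{(\alpha)}=F_i$; combining this with the derivative formula of the first step yields $G_i^{(\alpha)}=G_i\cap K^{(\alpha)}=F_i$, as required. The main obstacle is the successor step of the derivative formula, where both the openness and the closedness of $V$ are genuinely used, in the two opposite inclusions respectively; the separation argument is routine once zero-dimensionality is invoked. I note that the whole statement can alternatively be arranged as a transfinite induction on $\alpha$, reducing the successor case to the case $\alpha=1$ applied to the space $K^{(\beta)}$, but the direct argument above sidesteps that bookkeeping.
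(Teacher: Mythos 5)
Your proof is correct, but it takes a genuinely different route from the paper's. The paper proves the lemma by transfinite induction on $\alpha$: at a successor step it separates the $F_i$ inside $K^{(\alpha)}$ by disjoint open sets $O_i$ with $O_i'=F_i$, absorbs the leftover isolated points into $U_1$, and applies the induction hypothesis to the clopen partition $\{U_i\}$ of $K^{(\alpha)}$; at a limit step it separates the $F_i$ by disjoint open subsets of $K$ and uses compactness of $F=K\setminus\bigcup O_i$ to find $\beta<\alpha$ with $K^{(\beta)}\subseteq\bigcup O_i$, reducing to the induction hypothesis at $\beta$. You instead construct the $G_i$ in one step by clopen separation and then verify the derivative condition via the commutation formula $V^{(\gamma)}=V\cap K^{(\gamma)}$ for clopen $V$, whose proof (openness for one inclusion at successor stages, closedness for the other) is correct as you give it, as is the peeling construction and the final computation $G_i^{(\alpha)}=G_i\cap K^{(\alpha)}=F_i$. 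The one point to make explicit is where your argument needs more than the lemma's stated hypotheses: clopen separation of the disjoint compact sets $F_i$ requires $K$ to be zero-dimensional, which you get from countability (a connected metric space with more than one point is uncountable, and a totally disconnected compact metric space has a clopen basis). This is harmless here, since the lemma is only ever applied in the proof of Theorem 3.1 to spaces with $K^{(\alpha)}$ finite, which are countable; but the paper's inductive proof never invokes zero-dimensionality and so establishes the lemma for arbitrary compact metric spaces. In exchange, your argument is shorter, avoids the successor/limit case analysis entirely, and isolates the reusable fact that Cantor--Bendixson derivation commutes with intersection against clopen sets --- a fact the paper itself uses implicitly at the end of its limit step, when it passes from $G_i^{(\beta)}=O_i^{(\beta)}$ to $G_i^{(\gamma)}=O_i^{(\gamma)}$ for all $\beta\le\gamma<\alpha$.
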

\renewcommand{\proofname}{Proof:}
 \begin{proof}
We proceed by induction on $\alpha <\omega_1$ such that $K^{(\alpha)}=F_1\cup\cdots\cup F_n$, for all $1\leq i\neq j \leq n$, $F_i$ is clopen in $K^{(\alpha)}$ and $F_i\cap F_j=\emptyset $. 
 
 \medskip
 
\noindent The result is clear for $\alpha=0$.
 
 \medskip
 
 \noindent  Assume that the result is true for $\alpha<\omega_1$ and suppose that $\left\{F_i\right\}_{1\leq i\leq n}$ is a clopen partition of $K^{(\alpha+1)}$.

Each $F_i$ is closed in $K^{(\alpha)}$ which is compact, then we can find $O_i$ open subset of $K^{(\alpha)}$ such that $F_i\subset O_i$, $O_i'=F_i$, and  $O_i\cap O_j=\emptyset$, for $i\neq j$. Set $O=K^{(\alpha)}\backslash \cup_{i=1}^{n}O_i$, $U_1=O_1\cup O$ and $U_i=O_i$, for $2\leq i \leq n$. Then $K^{(\alpha)}=\cup_{i=1}^{n}U_i$, $U_i'=F_i$, and every $U_i$ is clopen in $K^{(\alpha)}$. 
Indeed we defined $O_i, 2\leq i\leq n$, as open subsets of $K^{(\alpha)}$ so $U_i$ is open in  $K^{(\alpha)}$. Moreover points in $O$ are isolated points of $K^{(\alpha)}$ thus $O$ and then  $U_1$ are open in $K^{(\alpha)}$. Finally $K^{(\alpha)}=\cup_{i=1}^{n}U_i$ then every $U_i$'s are closed.

\noindent We can apply the induction hypothesis to find $G_1,\cdots,G_n$ clopen subsets of $K$, mutually disjoints, such that $K=G_1\cup\cdots\cup G_n$ and $G_i^{(\alpha)}=U_i$, that is $G_i^{(\alpha+1)}=F_i$, $1\leq i \leq n$ .
 
 \medskip 
 
\noindent Finally we assume $\alpha$ is a limit ordinal and $K^{(\alpha)}=F_1\cup\cdots\cup F_n$, disjoint union  of clopen sets in $K^{(\alpha)}$. There exist $O_1,\cdots,O_n$ open subsets of $K$ such that $F_i\subset O_i$, $O_i^{(\alpha)}=F_i$ and $O_i\cap O_j=\emptyset$ for $i\neq j$. 

\noindent Set $F=K\backslash \cup_{i=1}^nO_i$, then $\bigcap_{\beta <\alpha}F\cap K^{(\beta)}=F\cap K^{(\alpha)}=\emptyset$. But $F$ is compact, then there exists $\beta < \alpha$ such that $F\cap K^{(\beta)}=\emptyset $, that is to say $K^{(\beta)}\subset \cup_{i=1}^nO_i$. Finally $K^{(\beta)}$ is the disjoint union of $O_i\cap K^{(\beta)}, \ 1\leq i \leq n$, clopen sets in $K^{(\beta)}$, so we can use the induction hypothesis to write $K=G_1\cup\cdots\cup G_n$, $G_i$ mutually disjoint and clopen in $K$ and $G_i^{(\beta)}=O_i\cap K^{(\beta)}=O_i^{(\beta)}$. Moreover we have $\beta<\alpha$ thus $G_i^{(\alpha)}=\bigcap_{\gamma<\alpha}G_i^{(\gamma)}=\bigcap_{\gamma<\alpha}O_i^{(\gamma)}=O_i^{(\alpha)}=F_i$.
 \end{proof}

\renewcommand{\proofname}{Proof of Theorem 3.1:}
\begin{proof}

We proceed by induction on $\alpha<\omega_1$ such that $K^{(\alpha)}$ is finite.

\begin{itemize}
\item If $K$ is finite then $\F{K}$ is finite dimensional, so has trivially the MAP and the property is true for $\alpha =0$.
\item Let $\alpha $ be a countable ordinal and assume that for every $\beta < \alpha$, if $(K,d)$ is a compact metric space so that $K^{(\beta)}$ is finite, then $\F{K}$ has the MAP.  

Now let $(K,d)$ be a compact metric space such that $K^{(\alpha)}$ is finite.

First $\F{K}$ is linearly isometric to $lip_0(K)^*$ and a theorem of A. Grothendieck \cite{Gr} (see also Theorem 1.e.15 in \cite{LT}) asserts that a separable Banach space which is isometric to a dual space and which has the AP has the MAP,  so it is enough to prove that $\F{K}$ has the BAP.

Secondly, if $K$ is such that $K^{(\alpha)}=\left\lbrace a_1,\cdots,a_n\right\rbrace $, singletons $\{a_i\}$'s are clopen in $K^{(\alpha)}$ and Lemma \ref{Lem3.3} gives $G_1,\cdots,G_n$
mutually disjoint clopen subsets of $K$ such that $\forall i \leq n$, $G_i^{(\alpha)}=\left\lbrace a_i \right\rbrace $ and $K=G_1\cup\cdots\cup G_n$. Moreover $\F{K}$ is isomorphic to $\displaystyle\left( \oplus_{i=1}^n\F{K_i}\right)_{\ell_1} $, where $K_i=G_i\cup\left\lbrace 0\right\rbrace $, $1\leq i \leq n$.

 Indeed if 
$\displaystyle a=\min_{i\neq j}\textrm{dist}(G_i,G_j) $, where $$\textrm{dist}(G_i,G_j)=\inf \left\lbrace d(x,y)\ ,\ x\in G_i\ , \ y\in G_j \right\rbrace,$$ by compactness we have $a>0$. Then  the operator 

$$
\Phi:
    \begin{array}{rcl}
        Lip_0(K) & \rightarrow & \left( \oplus_{i=1}^{n}Lip_0(K_i)\right)_{\infty}   \\
        f  &\mapsto &\left( f_{|K_i}\right)_{i=1}^{n}  \\
    \end{array}
$$

\noindent is onto, linear, weak$^{*}$-continuous and for $f\in Lip_0(K)$, we have $$\cfrac{a}{2 \ \textrm{diam}(K)}\norm{f}_L\leq \norm{\Phi (f)}_{\infty}\leq \norm{f}_L.$$
Hence $\F{K}$ is isomorphic to $\displaystyle\left( \oplus_{i=1}^n\F{K_i}\right)_{\ell_1} $.

\medskip

The BAP is stable with respect to finite $\ell_1-$sums and isomorphisms, then it is enough to prove that for any $ i\in \{1, \cdots, n\}$, $\F{K_i}$ has the BAP. In other words we need to prove that when $K^{(\alpha)}$ is a singleton then $\F{K}$ has the BAP.

\medskip 

Suppose as we may that $K^{(\alpha)}=\left\lbrace 0\right\rbrace $. Using the construction due to Kalton \cite{K1} we have a sequence of linear operators $S_N: \F{K}\rightarrow \F{F_N}$, $\norm{S_N}\leq 72$ and for every $\gamma \in \F{K}$, $\displaystyle \lim_{N\rightarrow +\infty}S_N\gamma =\gamma$.

Moreover, for every $N\in \N$, there exists $\beta <\alpha$ such that $F_N^{(\beta)} $ is finite and then $\F{F_N}$ has the MAP: 

since $\F{F_N}$ is separable,
 for every $N\in \N$, there exists a sequence of finite-rank linear operators $R_p^N: \F{F_N}\rightarrow\F{F_N}$  so that for every $ \gamma \in \F{F_N}$, $\displaystyle \lim_{p\rightarrow+\infty}R_p^N\gamma =\gamma$ and $\norm{R_p^N}\leq 1$ for every $p\in \N$ (\cite{Pe}, see also Theorem 1.e.13 in \cite{LT}).

Setting $Q_{N,p}=R_p^N\circ S_N$ we deduce that the range of $Q_{N,p}$ is finite dimensional as the range of $R_p^N$, $\norm{Q_{N,p}}\leq \norm{R_p^N}\norm{S_N}\leq 72$ and for every $\gamma \in \F{K}$, 
$$\lim_{N\rightarrow +\infty}\lim_{p\rightarrow +\infty}R_p^NS_N\gamma=\lim_{N\rightarrow +\infty}S_N\gamma=\gamma.$$ 
Thus $\F{K}$ has the $72$-BAP and this concludes the proof.
\end{itemize}
\end{proof}

\renewcommand{\proofname}{Proof:}

\section{Application to quotient spaces}

For a pointed metric space $(M,d)$ and $A$ a closed subset of $M$ containing $0$ we can define the quotient $M/A$ as the space $(M\backslash A) \cup \left\lbrace 0\right\rbrace $ with the metric given by: 

$$
d_{M/A}(x,y)=\left\{
    \begin{array}{ll}
        \mathrm{dist}(x,A) & , \ y=0  \\
        \min \left\lbrace d(x,y),\ \mathrm{dist}(x,A)+\mathrm{dist}(y,A) \right\rbrace  &, \  x,y\neq 0\\
    \end{array}
\right.
.$$

\begin{Coro}\label{Coro4.1}
Let $(K,d)$ be a compact metric space which is not perfect (i.e. $K'\neq K$). Then for every countable ordinal $\alpha\geq 1$, the space $\F{K/K^{(\alpha)}}$ has the MAP.
\end{Coro}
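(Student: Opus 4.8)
The plan is to realize $K/K^{(\alpha)}$ as a countable compact metric space and then invoke Theorem \ref{Thm} directly. Since $K$ is countable and compact (being not perfect, it has an isolated point, and combined with the fact that any infinite perfect compact metric space is uncountable, the hypothesis that $K'\neq K$ together with the existence of a countable ordinal $\alpha$ with $K^{(\alpha)}=\emptyset$ or finite forces $K$ to be countable), I first want to argue that the quotient metric space $K/K^{(\alpha)}$ is again countable and compact with the appropriate pointed structure, where the distinguished point $0$ is the class of the collapsed set $K^{(\alpha)}$.

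First I would verify that $K/K^{(\alpha)}$ is a compact metric space. The underlying set is $(K\setminus K^{(\alpha)})\cup\{0\}$, and $d_{K/K^{(\alpha)}}$ as defined in the displayed formula is a genuine metric (this is standard; the triangle inequality follows from the $\min$ structure). For compactness I would show that the quotient map $\pi:K\to K/K^{(\alpha)}$ sending $A:=K^{(\alpha)}$ to $0$ and fixing the rest is continuous and surjective from a compact space, so its image is compact. Countability is immediate since $K\setminus K^{(\alpha)}\subseteq K$ is countable, hence $K/K^{(\alpha)}$ is a countable set.

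Once $K/K^{(\alpha)}$ is established as a countable compact metric space, Theorem \ref{Thm} applies verbatim to give that $\F{K/K^{(\alpha)}}$ has the MAP, which is exactly the conclusion. So the corollary reduces entirely to checking the metric-space hypotheses of Theorem \ref{Thm} for the quotient. I would state explicitly that $A=K^{(\alpha)}$ is a closed subset of $K$ containing a point we declare to be $0$ (or we translate so that $0\in K^{(\alpha)}$), which is exactly the setup required by the quotient construction at the start of Section 4.

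The main obstacle I expect is purely the verification that the abstract quotient $M/A$ construction preserves the relevant topological properties, specifically that $(K/K^{(\alpha)}, d_{K/K^{(\alpha)}})$ is compact and that its topology agrees with the quotient topology induced by $\pi$ so that compactness transfers. The subtle point is that collapsing a closed set to a point in the metric $d_{M/A}$ could a priori fail to be compact if the metric topology were finer than the quotient topology; I would resolve this by showing $\pi$ is a quotient map that is also a closed map (continuous image of compact Hausdorff), so the metric topology on $K/K^{(\alpha)}$ coincides with the quotient topology, guaranteeing compactness. After that, the result is an immediate application of Theorem \ref{Thm}.
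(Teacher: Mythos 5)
Your overall strategy --- verify that $K/K^{(\alpha)}$ is a countable compact pointed metric space and then quote Theorem \ref{Thm} --- is exactly the paper's proof, and your compactness argument is fine (in fact simpler than you fear: the quotient map $\pi$ is $1$-Lipschitz into $\bigl(K/K^{(\alpha)},d_{K/K^{(\alpha)}}\bigr)$, so the image of the compact space $K$ is compact in the metric topology directly, and no comparison between the metric topology and the quotient topology is needed). However, your countability step contains a genuine error: you assert that the hypotheses force $K$ itself to be countable, invoking ``the existence of a countable ordinal $\alpha$ with $K^{(\alpha)}=\emptyset$ or finite.'' No such hypothesis appears in the corollary. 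It assumes only that $K$ is compact and not perfect, and $\alpha$ ranges over \emph{all} countable ordinals $\geq 1$; nothing forces any derivative of $K$ to be finite. Concretely, take $K=C\cup S$, where $C$ is the Cantor set and $S$ is a sequence of isolated points converging to a point of $C$: then $K'=C\neq K$, so the corollary applies, yet $K$ is uncountable and $K^{(\alpha)}=C$ for every $\alpha\geq 1$, so your premise ``$K$ is countable'' fails while the conclusion is non-trivial (the quotient is an infinite convergent sequence). The corollary is interesting precisely because $K$ may be uncountable and the quotient collapses the uncountable part.

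The step is repairable, because all you actually need is that $K\setminus K^{(\alpha)}$ is countable, and this holds for \emph{every} compact (indeed separable) metric space and every countable ordinal $\alpha$: at each Cantor--Bendixson stage only the isolated points of $K^{(\beta)}$ are removed, a separable metric space has at most countably many isolated points (it is second countable and isolated points form a discrete subspace), and there are only countably many stages $\beta<\alpha$. Alternatively, argue as the paper does: since $\pi$ is a local isometry off $K^{(\alpha)}$ (if $\mathrm{dist}(x,K^{(\alpha)})=r>0$, then $d_{K/K^{(\alpha)}}$ agrees with $d$ on the ball of radius $r/3$ about $x$, because the term $\mathrm{dist}(\cdot,K^{(\alpha)})+\mathrm{dist}(\cdot,K^{(\alpha)})$ in the definition of the quotient metric is then too large to achieve the minimum), one checks that $\bigl(K/K^{(\alpha)}\bigr)^{(\alpha)}$ is empty or $\{0\}$, and the characterization recalled in Section 2 (a compact metric space is countable if and only if some countable derivative is finite) yields countability of the quotient. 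With either repair, your reduction to Theorem \ref{Thm} goes through exactly as you intended.
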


\begin{proof}
Remark that for every compact metric space $(K,d)$ and every countable ordinal $\alpha \geq 1$, the quotient space $K/K^{(\alpha)}$ is compact and countable because $\left(K/K^{(\alpha)}\right)^{(\alpha)}$ is empty or $\{0\}$. Then this result is a consequence of Theorem \ref{Thm}.
\end{proof}

\begin{remark}
\begin{enumerate}
\item If $K$ is perfect, then $\F{K/K^{(\alpha)}}=\{0\}$.
\item Otherwise $\F{K}/\F{K^{(\alpha)}}$ is linearly isometric to $\F{K/K^{(\alpha)}}$ (We write $\F{K}/\F{K^{(\alpha)}}\equiv\F{K/K^{(\alpha)}}$).

Indeed we can assume that $0\in K^{(\alpha)}$. Then 
\begin{align*}
&\{f\in Lip_0(K)\ ; \ \forall x,y\in K^{(\alpha)}, f(x)=f(y)\}\\&=\{f\in Lip_0(K)\ ; \ \forall x\in K^{(\alpha)}, f(x)=0\}.
\end{align*}
  And since $\F{K^{(\alpha)}}=\overline{\textrm{vect}}\ \{\delta_x, \ x\in K^{(\alpha)}\}$, we have $$\{f\in Lip_0(K)\ ; \ \forall x\in K^{(\alpha)}, f(x)=0\}=\F{K^{(\alpha)}}^{\perp},$$ which is isometric to $\left(\F{K}/\F{K^{(\alpha)}}\right)^*$. To sum up $$\{f\in Lip_0(K)\ ; \ \forall x,y\in K^{(\alpha)}, f(x)=f(y)\}\equiv\left(\F{K}/\F{K^{(\alpha)}}\right)^*.$$

From Propositions 1.4.3 and 1.4.4 in \cite{W},
 there exists an isometry $\Phi$ from $\{f\in Lip_0(K)\ ; \ \forall x,y\in K^{(\alpha)}, f(x)=f(y)\}$ onto $Lip_0\left(K/K^{(\alpha)}\right)$. Moreover $Lip_0(K/K^{(\alpha)})$ is linearly isometric to $\F{K/K^{(\alpha)}}^*$, so  the space $\left(\F{K}/\F{K^{(\alpha)}}\right)^*$ is isomorphically isometric to $\F{K/K^{(\alpha)}}^*$. One can easily check that $\Phi$ is weak$^*$-continuous, and finally $\F{K}/\F{K^{(\alpha)}}$ is linearly isometric to $\F{K/K^{(\alpha)}}$.
 \end{enumerate}
\end{remark}

To finish this paper we will use Corollary \ref{Coro4.1} and the previous remark to prove the following:
in order to obtain that every countable compact metric space has the BAP it
 is not possible to use the three-space property due to G. Godefroy and P.D. Saphar \cite{GS}, asserting: 

If $M$ is a closed subspace of a Banach space $X$ so that $M^{\bot}$ is complemented in $X^*$ and $X/M$ has the BAP, then $X$ has the BAP if and only if $M$ has the BAP.
 
Indeed we can construct a compact metric space $K$ so that $K^{(2)}=\left\lbrace 0\right\rbrace $, in particular $\F{K}$, $\F{K'}$ and $\F{K}/\F{K'}$ have the MAP, but $\F{K'}^{\bot}$ is not complemented in $Lip_0(K)$.

To construct this space we need a proposition similar to Proposition 7 in \cite{GO}: 

\begin{Prop}\label{PropProj}
 For any $\lambda>0$, there exist a finite metric space $H_{\lambda}$ and a subset $G_{\lambda}$ of $H_{\lambda}$ such that if $P:Lip_0(H_{\lambda})\rightarrow \F{G_{\lambda}}^{\bot}$ is a bounded linear projection, then $\norm{P}\geq \lambda$.
\end{Prop}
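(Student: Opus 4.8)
The plan is to pass to the dual picture, where everything is finite-dimensional, and reduce the statement to a lower bound on the norm of linear \emph{extension} operators for Lipschitz functions.

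First I would record the reduction. Since $H_{\lambda}$ will be finite, $Lip_0(H_{\lambda})$ and $\F{H_{\lambda}}$ are finite-dimensional and mutually dual, and $\F{G_{\lambda}}^{\bot}=\{f\in Lip_0(H_{\lambda}):f_{|G_{\lambda}}=0\}$. If $P$ is a projection onto this subspace, then $I-P$ annihilates $\F{G_{\lambda}}^{\bot}$, so $(I-P)f$ depends only on $f_{|G_{\lambda}}$; writing $(I-P)f=\Lambda(f_{|G_{\lambda}})$ defines a linear extension operator $\Lambda:Lip_0(G_{\lambda})\rightarrow Lip_0(H_{\lambda})$ with $(\Lambda g)_{|G_{\lambda}}=g$. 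Using a norm-preserving (McShane) extension of each $g$ one gets $\norm{\Lambda}\leq\norm{I-P}\leq 1+\norm{P}$. Hence it suffices to produce $G_{\lambda}\subset H_{\lambda}$ for which every linear extension operator has norm at least $\lambda+1$; equivalently, the relative projection constant of $\F{G_{\lambda}}$ in $\F{H_{\lambda}}$ is large.

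To build such a space I would follow the scheme of Proposition 7 in \cite{GO}: take a finite, highly symmetric metric space in which the points of $H_{\lambda}\setminus G_{\lambda}$ play the role of (approximate) midpoints of pairs of points of $G_{\lambda}$, iterated so that the metric carries many overlapping betweenness constraints (a diamond/Laakso-type branching, whose depth I would tune to $\lambda$). The design should admit a large isometry group $\Gamma$ fixing $0$ and preserving $G_{\lambda}$, acting transitively on the relevant orbits of added points. Given any extension operator $\Lambda$, I would symmetrize it: with $U_{\gamma}f=f\circ\gamma^{-1}$ (an isometry of $Lip_0$ preserving both $Lip_0(G_{\lambda})$ and the class of extensions), the average $\tilde{\Lambda}=\frac{1}{\abs{\Gamma}}\sum_{\gamma\in\Gamma}U_{\gamma}^{-1}\Lambda U_{\gamma}$ is again an extension operator with $\norm{\tilde{\Lambda}}\leq\norm{\Lambda}$, so it is enough to bound $\norm{\tilde{\Lambda}}$ from below for $\Gamma$-equivariant $\Lambda$.

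Equivariance collapses the freedom in the extension to a handful of scalar parameters: the value of $\tilde{\Lambda}g$ at an added point is forced to be a symmetric linear combination of the $g$-values on the nearby points of $G_{\lambda}$. I would then exhibit an explicit test function $g$ on $G_{\lambda}$ with $\norm{g}_{L}=1$ (a balanced $\pm 1$ pattern on the coordinates) for which the betweenness constraints at successive levels of the construction compound, so that the Lipschitz constant of any equivariant extension $\tilde{\Lambda}g$ is at least $\lambda+1$. Combined with the reduction, this yields $\norm{P}\geq\lambda$. The main obstacle is the quantitative geometry: choosing the distances among the added points (and the depth of iteration) so that the space is genuinely a metric space, the symmetry group is rich enough to reduce to an essentially one-parameter family of equivariant extensions, and the forced stretching of the test function provably exceeds the prescribed $\lambda$ — obtaining an \emph{unbounded}, rather than merely bounded-below, extension constant is the crux, and is exactly what the iterated midpoint structure is there to guarantee.
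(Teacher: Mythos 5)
Your opening reduction is correct and clean: for finite $H_{\lambda}$ the space $Lip_0(H_{\lambda})$ is finite-dimensional, $\F{G_{\lambda}}^{\bot}=\{f : f_{|G_{\lambda}}=0\}$, and a projection $P$ onto this subspace corresponds, via $\Lambda g=(I-P)\tilde{g}$ with $\tilde{g}$ a McShane extension of $g$, to a linear extension operator $\Lambda: Lip_0(G_{\lambda})\rightarrow Lip_0(H_{\lambda})$ with $\norm{\Lambda}\leq 1+\norm{P}$ (and conversely $P=I-\Lambda\circ R$), so the proposition is indeed equivalent to the unboundedness of linear extension constants over finite pairs. The symmetrization step is also sound. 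But the decisive step --- that your iterated-midpoint construction forces equivariant extension constants to grow without bound with the depth --- is nowhere proved; you yourself flag it as ``the crux.'' As written this is a plan, not a proof, and there is concrete reason to doubt the proposed geometry: a point inserted as a metric midpoint is extended with \emph{no} loss by averaging the two endpoint values, and a Laakso-type branching scheme produces uniformly doubling spaces, for which linear extension operators with norm bounded by a function of the doubling constant do exist (Lee--Naor; this is consistent with the Lancien--Perneck\'a BAP result \cite{LP} quoted in the introduction), so for that family the desired lower bound is provably false. Diamond graphs escape this particular objection, but you give no estimate showing that the betweenness constraints ``compound''; the known explicit lower bounds for linear extension constants (Brudnyi--Brudnyi) use quite different, expander-like structures, and even there unboundedness is delicate. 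So the gap is not a technicality: it is the entire quantitative content of the proposition.

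The paper avoids constructing anything and argues by contradiction, which is why the statement comes cheaply there. Assume projections of norm $\leq\lambda_0$ exist for \emph{all} finite pairs $(G,H)$. Take the Godefroy--Ozawa compact space $K$ with $\F{K}$ failing AP \cite{GO} and an increasing sequence of finite subsets $G_n$ with dense union; for $k\geq n$ get projections $P_n^k:Lip_0(G_k)\rightarrow\F{G_n}^{\bot}$ of norm $\leq\lambda_0$, push them into $Lip_0(K)$ by the inf-convolution extension, and take a pointwise limit along a nontrivial ultrafilter to obtain projections $P_n:Lip_0(K)\rightarrow\F{G_n}^{\bot}$ of norm $\leq\lambda_0$. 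Then $Q_n=Id-P_n$ is a finite-rank projection whose kernel $\F{G_n}^{\bot}$ is weak$^*$-closed, hence $Q_n$ is weak$^*$-continuous, $\norm{Q_n}\leq 1+\lambda_0$, and $Q_nf\rightarrow f$ pointwise; Theorem 2 of \cite{BM} then gives $\F{K}$ the $(1+\lambda_0)$-BAP, a contradiction. In short: the paper derives the existence of bad finite pairs softly from the known infinite-dimensional AP failure, whereas your route requires exactly the hard explicit estimate you have left open; to salvage it you would need either to carry out a genuine lower-bound computation on a suitable (non-doubling, likely expander-like) family, or to switch to the paper's compactness argument.
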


\begin{proof}
 Assume that for some $\lambda_0>0$ and for all pairs $(G,H)$ of finite metric spaces with $G\subset H$ we can construct $P:Lip_0(H)\rightarrow \F{G}^{\bot}$ linear projection with norm bounded by $\lambda_0$.

Let $K$ be the compact metric space such that $\F{K}$ fails AP appearing in Corollary 5 of \cite{GO}. There exists $(G_n)_{n\in \N}$ an increasing sequence of finite subsets of $K$ such that $\displaystyle\overline{\bigcup_{n\in \N}G_n}=K$.

Then for every $n\in \N$ and every $k\geq n$, there exists $P_n^k:Lip_0(G_k)\rightarrow \F{G_n}^{\bot}$ a linear projection of norm less than $\lambda_0$, where $\F{G_n}^{\bot}\subset Lip_0(G_k)$.  

\medskip 

Fix $n\in \N$. For $k\in \N$, let $E_k: Lip_0(G_k)\rightarrow Lip_0(K)$ be the non linear extension operator which preserves the Lipschitz constant  given by the inf-convolution formula: 
$$\forall f\in Lip_0(K), \  \forall x\in K, \ E_kf(x)=\inf_{y\in G_k}\lbrace f(y)+\norm{f}_Ld(x,y) \rbrace.$$

For $f\in Lip_0(K)$, we set
$$
\widetilde{P_n^k}(f) = \left\{
    \begin{array}{ll}
       E_kP_n^k\left( f_{|G_k}\right) & , k\geq n  \\
       0 &, k<n
    \end{array}
\right.
.$$

Then $\norm{\widetilde{P_n^k}(f)}_L\leq \lambda_0\norm{f}_L$, for every $f\in Lip_0( K)$.

If $\mathcal{U}$ is a non trivial ultrafilter on $\N$, for every $f\in Lip_0\left( K\right) $ we can define $P_nf$ as the pointwise limit of $\widetilde{P_n^k}(f)$ with respect to $k\in \mathcal{U}$. Then $P_n$ is a linear projection onto $\F{G_n}^{\bot}\subset Lip_0(K)$ because $P_n^k$ is a projection onto $\F{G_n}^{\bot}\subset Lip_0(G_k)$. Moreover $\norm{P_nf}_L\leq \lambda_0 \norm{f}_L$ and $P_nf$ pointwise converges to $0$ for any $f\in Lip_0(K)$.

Set $Q_n=Id_{Lip_0(K)}-P_n: Lip_0(K)\rightarrow Lip_0(K)$. Then $Q_n$ is a continuous linear projection of finite rank and $\mathrm{Ker}\ Q_n =\F{G_n}^{\bot}$ is weak$^*$-closed. Therefore $Q_n$ is weak$^*$-continuous. Moreover  $\norm{Q_n}\leq 1+\lambda_0$ and for every $f\in Lip_0(K)$, $Q_nf$ converges pointwise to $f$.

Using Theorem 2 in \cite{BM} we deduce that $\F{K}$ has the $(1+\lambda_0)$-BAP, contradicting our assumption on $K$.

\end{proof}

Thanks to that proposition we will construct a compact metric space $K$ such that $K^{(2)}=\left\lbrace 0\right\rbrace $ and $\F{K'}^{\bot}$ is not complemented in $Lip_0(K)$:

\medskip 

For every $n\in \N$ there exist $A_n\subset B_n$ finite such that for every continuous linear projection $P_n:Lip_0(B_n)\rightarrow \F{A_n}^{\bot}$, we have $\norm{P_n}\geq n$.
 
Set $\alpha_n=\min\left\lbrace d(x,y)\ , \  x\neq y \in B_n\right\rbrace >0$. If we see $B_n$ as a subspace of $\ell_{\infty}^{m_n}$, with $m_n$ the cardinality of $B_n$, we can find for every $a\in A_n$, $L_n^a$ a sequence converging to $a$ such that $L_n^a\subset B\left( a,\frac{\alpha_n}{2}\right) $.

Define $K_n=\displaystyle\left(\bigcup_{a\in A_n}L_n^a \right)\cup B_n $, we obtain $A_n\subset B_n\subset K_n$ and $K_n'=A_n$.  We can assume that the diameter of $K_n$ is less than $8^{-n}$.

Finally we define $K:=\left( \displaystyle \bigcup_{n\in \N}\left\lbrace n \right\rbrace \times K_n \right)\cup \left\lbrace 0 \right\rbrace $ equipped with the distance: 

$$ \begin{array}{rl}
d(0,(n,x))&=2^{-n}\\

d((n,x),(m,y)) &= \left\{
    \begin{array}{ll}
        d_{K_n}(x,y) & ,\  n=m  \\
        |2^{-n}-2^{-m}|&, \ n\neq m\\
    \end{array}
\right.    \end{array}
.$$

Then $K^{(2)}=\left\lbrace 0\right\rbrace$. 

\medskip 

Now assume that there exists $P:Lip_0(K)\rightarrow \F{K'}^{\bot}$ a continuous linear projection. Let $E_n:Lip_0(B_n)\rightarrow Lip_0(K_n)$, $F_n : Lip_0(B_n)\rightarrow Lip_0(K)$ and  $R_n:Lip_0(K)\rightarrow Lip_0(B_n)$  be defined as follows:

$$\begin{array}{lrl}
 \forall f\in Lip_0(B_n), \ &
(E_nf)(x) &= \left\{
    \begin{array}{ll}
        f(x) & , \ x\in B_n   \\
        f(a) &, \  x \in L_n^a
    \end{array}
\right.\\
\\
 \forall f\in Lip_0(B_n), \ & (F_nf)(m,x)&=\left\{
    \begin{array}{ll}
       (E_nf)(x) & , \  n=m  \\
        0&, \ n\neq m\\
    \end{array}
\right.\\
\\
\forall f\in Lip_0(K), \ &\  R_nf &= f_{|\left\lbrace n\right\rbrace \times B_n} 
\end{array} .$$

\smallskip

 We set $P_n:=R_n\circ P\circ F_n : Lip_0(B_n)\rightarrow \F{A_n}^{\bot}$ and we have that $P_n$ is a continuous linear projection. From Proposition \ref{PropProj} we deduce that $\norm{P_n}\geq n $. Moreover our choice of $\alpha_n$ implies that $\norm{F_n}\leq 3$, then finally $\norm{P}\geq n/3$ holds for every $n\in \N$. Therefore $P$ is unbounded and $\F{K'}^{\bot}$ is not complemented in $Lip_0(K)$.

\medskip

\noindent\textbf{Acknowledgement.} The author would like to thank Gilles Godefroy for fruitful discussions and Gilles Lancien for useful conversations and comments.

\bibliographystyle{amsplain}

\end{document}